\begin{document}
\newtheorem{theorem}{Theorem}[section]
\newtheorem{lemma}[theorem]{Lemma}

\theoremstyle{definition}
\newtheorem{definition}[theorem]{Definition}
\newtheorem{example}[theorem]{Example}
\newtheorem{xca}[theorem]{Exercise}
\newtheorem{conjecture}[theorem]{Conjecture}

\theoremstyle{remark}
\newtheorem{remark}[theorem]{Remark}

\numberwithin{equation}{section}

\title{A computational upper bound on Jacobsthal's function}
\author{Fintan Costello,Paul Watts}

\date{}

\maketitle

\begin{abstract}
The function $h(k)$ represents the smallest number $m$ such that every sequence of $m$ consecutive integers contains an integer coprime to the first $k$ primes.  We give a new computational method for calculating strong upper bounds on $h(k)$.
\end{abstract}

\section{Introduction}

  Letting $p_i$ be the $i$th prime and $P_k$ be the product of the all primes up to $p_k$, Jacobsthal's function
   $h(k)$ represents the smallest number $m$ such that every sequence of $m$ consecutive integers contains an integer coprime to  $P_{k}$.  
The function $h(k)$ has been studied by a number of different authors, and is central to results on the maximal gaps between consecutive primes \cite{MaierPomerance}, \cite{Pintz} and on the least prime in arithmetic progressions \cite{Pomerance}. 
Explicit values of $h(k)$ are known only for $k \leq 49$, with the computation of $h(49)$ by Hagedorn taking $3$ months on a cluster of $30$ computers \cite{Hagedorn}.  

Let $g(n)$ represent the smallest number $m$ such that every sequence of $m$ consecutive integers contains an integer coprime to  $n$.  In a letter to Erdos \cite{ErdosJacobsthal}, Jacobsthal asked whether  
\begin{equation*}
\begin{split}
 h(k) &\leq C \ k^{2} \\
g(n) & \leq h(k) \textit{ for all $n$ with $k$ distinct prime factors}
\end{split}
\end{equation*} 
both hold for all positive $k$ and some constant $C$.  
Iwaniec's proof \cite{Iwaniec} that
\begin{equation*}
 h(k) \leq C \ (k \ log \ k )^{2}
\end{equation*} 
for an unknown constant $C$ gives our closest approach to the first of these conjectures.  
Hadju and Saradha  \cite{HadjuSaradhaJacobsthal} recently disproved the second conjecture using the explicit values of $h(k)$ calculated by Hagedorn.

 The best known explicit upper bounds on $h(k)$, of
\begin{equation*}
\begin{split}
h(k)&\leq 2^{k} \\ 
h(k) & \leq 2k^{2+2 e \ log \ k}
\end{split}
\end{equation*}
are due to Kanold \cite{Kanold}  and  Stevens \cite{Stevens}  respectively, with the second bound being stronger for $k \geq 260$ (these bounds in fact apply to $g(n)$ for all $n$ with $k$ distinct prime factors, and so cover the case where $n=P_k$).  These bounds are quite weak: while from Hagedorn's calculations we have  $h(49)=742$,   these bounds give $h(49) <  10^{15}$ and $h(49) <  10^{40}$ respectively.   We thus know relatively little about the explicit behavior of $h(k)$ for $k$ greater than $49$.    

In this paper we address this gap using a new computational method for computing explicit upper bounds on $h(k)$.  This method  gives bounds much stronger than those given by Kanold and Stevens; for example, this method gives a bound on $h(49)$ which is less than $3$ times the true value of $h(49)$.  This method is also much faster than that used by Hagedorn to calculate values of $h(k)$, computing a bound on $h(49)$ in seconds.   We used this method to compute upper bounds on $h(k)$ for  $k$ from $50$ to $10,000$: for all $k$ in that range we find
\begin{equation*}
  h(k) \leq 0.27749612254 \ k^2 \ log \ k 
\end{equation*} 
a bound hundreds of orders of magnitude stronger than those given by Kanold  and Stevens  in this range.

Our method is based on an expression for the function $\varphi(b,m,k)$,  which represents the smallest number $x$ such that every sequence of $m$ consecutive integers contains at least $x$ integers coprime to $P_k$.  Taking $\varphi(n)$ to be Euler's totient function and
 $\varphi_{min}(m,i)$ to be the minimum value of $\varphi(b,m,i)$ across all $b$,  we  prove
\begin{equation*}
\begin{split}
\varphi(P_k) \left\lfloor \frac{m}{P_k}\right\rfloor + r-   \sum\limits_{i=1}^{k}\left\lceil\frac{r}{p_i}\right\rceil&  + \sum\limits_{i=2}^{k}\left\lfloor\frac{r}{2 p_i}\right\rfloor +E   \\ &+  \sum\limits_{i=2}^{k-1}   \sum\limits_{j=i+1}^{k} 
\varphi_{min}\left(\left\lfloor \frac{r}{p_i p_j}\right\rfloor,i-1\right)  \leq \varphi_{min}(m,k)
\end{split}
\end{equation*}
for all $m$ and $k$, where $r =  m \textit{ mod } P_k$  and
where $E$ is a positive correction arising due to constraints on the co-occurence of residues of the primes up to $p_k$.
This expression has the computationally nice property that the recurrent double sum is dominated by terms where $i$ is small, and putting computational effort into calculating accurate  values of $\varphi_{min}( r ,i-1)$ across all candidate $r$'s for some set of low values of $i$  allows us to efficiently compute strong lower bounds on $\varphi(b, r ,i-1)$ for a large range of values of $k$. Since it is clear that
$$0 < \varphi_{min}(r ,i-1) \Rightarrow h(k) \leq m$$
this in turn  allows us to efficiently compute strong upper bounds on $h(k)$ across a large range of $k$.

In the first three parts of this paper we prove this expression.   In the last part we describe algorithms based on this expression that we use to compute explicit upper bounds on $h(k)$.

\section{Preliminaries} 

 We take $\omega(a)$ to represent the number of distinct prime factors of $a$,    $\omega_{k}(a)$ to be  the number of distinct primes that are factors of both $a$ and $P_k$, and $l_{k}(a)$ to be the lowest  factor of $a$ that is also a factor of $P_k$. 
 
For any set of integers $S$ we take  $F_{S}(d)$ to be the number of integers in $S$ that are divisible by $d$; for any integer $b$
we take $F_{b,m}(d)$ to be the number of integers in the sequence of $m$ consecutive integers $b+1, \ldots b+m$
that are divisible by $d$.

 We use the following result concerning divisors of members of arithmetic sequences.
\begin{theorem}
\label{sequenceTheorem}
 Let $d$ and $n$ be two coprime squarefree integers.  Then for any arithmetic sequence
\begin{equation*}
b+d,b+2d,\ldots,
\end{equation*}
there necessarily exists a corresponding sequence of consecutive integers
\begin{equation*}
c b+1,c b+2,\ldots
\end{equation*}
  such that
\begin{equation*}
GCD(c b+x,n)=GCD(b+xd,n) \textit{ for all $x$}
\end{equation*}
\end{theorem}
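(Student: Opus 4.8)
The plan is to use the hypothesis $\gcd(d,n)=1$, which makes $d$ invertible modulo $n$, to ``straighten'' the progression $b+d, b+2d, \dots$ into a sequence with common difference $1$. Let $d'$ be a multiplicative inverse of $d$ modulo $n$, so that $d d' \equiv 1 \pmod n$, and observe that multiplying the general term $b+xd$ by $d'$ gives $d'(b+xd) = d'b + x(d'd) \equiv d'b + x \pmod n$, an expression that is linear in $x$ with leading coefficient $1$. This suggests taking $c$ to be any positive integer with $cb \equiv d'b \pmod n$; the simplest admissible choice is any positive $c$ with $c \equiv d' \pmod n$ (take the least positive residue of $d'$, adding a multiple of $n$ if a larger starting point $cb$ is wanted).

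To see that this $c$ works, fix an arbitrary integer $x$. On one hand, since $\gcd(\,\cdot\,,n)$ is unchanged when the first argument is multiplied by an integer coprime to $n$ (for each prime power $p^e$ exactly dividing $n$ we have $p^e \mid d'a \iff p^e \mid a$), we get $\gcd(b+xd,n) = \gcd\bigl(d'(b+xd),n\bigr)$; and since $\gcd(\,\cdot\,,n)$ is unchanged under adding a multiple of $n$ to the first argument, and $d'(b+xd) \equiv d'b+x \pmod n$, this equals $\gcd(d'b+x,n)$. On the other hand $c\equiv d' \pmod n$ gives $cb+x \equiv d'b+x \pmod n$, so $\gcd(cb+x,n) = \gcd(d'b+x,n)$ by the same translation invariance. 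Combining, $\gcd(cb+x,n) = \gcd(b+xd,n)$, and since $x$ was arbitrary the sequence $cb+1, cb+2, \dots$ has the required property.

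I do not anticipate a real obstacle: the argument rests only on the two elementary invariance properties of $\gcd(\,\cdot\,,n)$ quoted above and on the existence of a positive residue in the class of $d'$ modulo $n$. It is worth noting that squarefreeness of $d$ and $n$ is not actually needed for this statement --- only $\gcd(d,n)=1$ is used --- so the hypothesis as stated is stronger than strictly necessary, presumably to stay within the paper's standing conventions. If one prefers the weakest condition on $c$ it is $c \equiv d' \pmod{n/\gcd(n,b)}$, obtained by cancelling $\gcd(n,b)$ from $cb \equiv d'b \pmod n$; but the choice $c\equiv d'\pmod n$ is cleaner and sidesteps any case analysis on $\gcd(b,n)$.
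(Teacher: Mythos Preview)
Your proof is correct and is essentially the same as the paper's: the paper chooses $c$ and $z$ with $cd - zn = 1$ via B\'ezout, which is exactly your choice $c \equiv d^{-1} \pmod n$, and then computes $c(b+xd) - xzn = cb + x$ to reach the same conclusion by the same two invariance properties of $\gcd(\cdot,n)$ that you invoke. Your observation that squarefreeness is not used is also accurate.
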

\begin{proof}
Choose integers $c$ and $z$ such that $c d - z n = 1$.  Since $c$ divides $= zn+1$ it is clear that $c$ and $n$ are coprime and so we have
\begin{equation*}
GCD(c(b+xd)-xzn, n) = GCD(b+xd,n) \textit{ for all $x$}
\end{equation*}
(because $xzn$ shares all of $n$'s prime factors while the only prime factors that $c(b+xd)$ shares with $n$ are those of $(b+xd)$).
Rearranging the left hand side (and using the fact that $c d - zn= 1$) we get
$$c(b+xd)-xzn = c b+x (c d - zn)=c b+x \textit{ for all $x$} $$
and the consecutive sequence starting at $cb+1$ has the required property.
\end{proof}

Following from this we define  $c_{b}(d)$ as follows:
\begin{definition}
For some integer $d$ with $p_i$ as its lowest prime factor, let $y+d$ be the first term in the sequence $b+1, \ldots b+m$ that has $d$ as a divisor.  We have then an arithmetic sequence
\begin{equation*}
y+d,y+2d,\ldots, y+F_{b,m}(d) \times d
\end{equation*}
all of whose members are divisible by $d$ and all of whom are in the sequence $b+1, \ldots b+m$ .  Then
$c_{b}(d)$ is an integer such that
\begin{equation*}
GCD(c_{b}(d)+x,P_{i-1})=GCD(y+xd,P_{i-1}) \textit{ for all $x$}
\end{equation*}
and so  the sequence of consecutive integers starting at $c_{b}(d)+1$ all have the same prime factors in common with $P_{i-1}$  as the corresponding terms in the arithmetic sequence of integers between $b+1$ and $b+m$ that have $d$ as a divisor. 
\end{definition}

\section{A recurrent expression for  $\varphi(b,m,k)$}

We now prove a recurrent expression for  $\varphi(b,m,k)$, the number of integers from $b+1$ to $b+m$ which are coprime to $P_{k}$.
\begin{theorem}
\label{omegaTheorem}
For  integers $m$, $b$ and $k$ we have
\begin{equation*}
\varphi(b,m,k) = m - \sum_{\substack{i=1}}^{k} F_{b,m}(p_{i})  + \sum_{\substack{b<a\leq b+m \\ \omega_{k}(a)>0}}(\omega_{k}(a)-1)
\end{equation*}
\end{theorem}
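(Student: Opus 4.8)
The plan is to rewrite every quantity in the claimed identity as a sum over the integers $a$ in the window $b < a \le b+m$, and then verify the resulting identity term by term. First I would observe that, directly from the definition of $F_{b,m}$,
\begin{equation*}
F_{b,m}(p_i) = \sum_{b < a \le b+m} [\,p_i \mid a\,],
\end{equation*}
where $[\cdot]$ is an Iverson bracket. Summing over $i$ from $1$ to $k$ and interchanging the two finite sums gives
\begin{equation*}
\sum_{i=1}^{k} F_{b,m}(p_i) = \sum_{b < a \le b+m} \ \sum_{i=1}^{k} [\,p_i \mid a\,] = \sum_{b < a \le b+m} \omega_k(a),
\end{equation*}
since $\omega_k(a)$ counts exactly the primes $p_1,\dots,p_k$ dividing $a$. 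Likewise $m = \sum_{b < a \le b+m} 1$, so the left-hand side of the theorem, $m - \sum_{i=1}^k F_{b,m}(p_i)$, equals $\sum_{b<a\le b+m}\bigl(1 - \omega_k(a)\bigr)$.

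Next I would handle the right-hand side the same way. By definition $\varphi(b,m,k) = \sum_{b<a\le b+m}[\,\omega_k(a)=0\,]$, and the correction sum is $\sum_{b<a\le b+m}[\,\omega_k(a)>0\,](\omega_k(a)-1)$. So it suffices to prove the pointwise identity
\begin{equation*}
1 - \omega_k(a) = [\,\omega_k(a)=0\,] - [\,\omega_k(a)>0\,]\bigl(\omega_k(a)-1\bigr)
\end{equation*}
for every $a$. This is a two-case check: if $\omega_k(a)=0$ both sides equal $1$; if $\omega_k(a) = t \ge 1$ the left side is $1-t$ and the right side is $0 - (t-1) = 1-t$. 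Summing this identity over all $a$ in the window yields the theorem.

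The argument is essentially inclusion–exclusion repackaged, so there is no genuinely hard step; the only points requiring care are the legitimacy of swapping the order of the two finite summations (immediate, as both index sets are finite) and making sure the case split in the pointwise identity is exhaustive, i.e.\ that the window contains no integers other than those with $\omega_k(a)=0$ and those with $\omega_k(a)\ge 1$, which is trivially true. I would present the whole proof in the compact ``sum over $a$'' form above rather than unwinding the full inclusion–exclusion alternating sum, since the former makes the role of the overcount term $\sum(\omega_k(a)-1)$ transparent and sets up the recurrence developed in the next section.
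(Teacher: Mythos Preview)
Your proof is correct and is essentially the same argument as the paper's, just written out in explicit Iverson-bracket form: the paper compresses your pointwise identity into the single sentence that $m-\sum_i F_{b,m}(p_i)$ undercounts $\varphi(b,m,k)$ by $\omega_k(a)-1$ for each $a$ with $\omega_k(a)>0$. The only cosmetic slip is calling $m-\sum_i F_{b,m}(p_i)$ ``the left-hand side of the theorem'' when in the stated equation $\varphi(b,m,k)$ is on the left; the mathematics is unaffected.
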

\begin{proof}
The expression
\begin{equation*}
 m - \sum_{\substack{i=1}}^{k} F_{b,m}(p_{i}) 
\end{equation*}
undercounts $\varphi(b,m,k)$ by $\omega_{k}(a)-1$ for each integer $a$ in our sequence that has $\omega_{k}(a)>0$, and so
\begin{equation*}
\varphi(b,m,k) = m-\sum_{\substack{i=1}}^{k} F_{b,m}(p_{i})    + \sum_{\substack{b<a\leq b+m \\ \omega_{k}(a)>0}}(\omega_{k}(a)-1)
\end{equation*}
as required.
\end{proof}

\begin{theorem}
\label{minFactorSum}
For any $p_x \mid P_k$ let $S$ be the set of integers  $a \in B$ that have $l_{k}(a)=p_x$.  Then
\begin{equation*}
 \sum_{\substack{a \in S}}(\omega_{k}(a)-1) =   \sum\limits_{\substack{x < i \leq k }} F_{S}(p_{i} p_{x})
\end{equation*}
\end{theorem}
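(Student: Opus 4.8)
The plan is to prove the identity by rewriting each summand $\omega_k(a)-1$ as a count of ``large'' prime divisors of $a$, and then interchanging the order of summation. The only arithmetic input needed is the defining property of $S$, namely that $l_k(a)=p_x$ for every $a\in S$.

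First I would unpack the hypothesis $l_k(a)=p_x$. This says that $p_x\mid a$ while no prime $p_j$ with $j<x$ divides $a$; in particular every $a\in S$ has $\omega_k(a)\ge 1$ and the primes among $p_1,\dots,p_k$ that divide $a$ are precisely $p_x$ together with some (possibly empty) subset of $\{p_{x+1},\dots,p_k\}$. Hence, writing $[\,\cdot\,]$ for an indicator,
\begin{equation*}
\omega_k(a)-1 \;=\; \#\{\, i : x<i\le k,\ p_i\mid a \,\} \;=\; \sum_{x<i\le k}[\,p_i\mid a\,].
\end{equation*}
The ``$-1$'' is exactly absorbed here because $p_x$ is the unique prime factor of $a$ from $P_k$ with index $\le x$.

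Next I would sum this over $a\in S$ and swap the two finite sums:
\begin{equation*}
\sum_{a\in S}(\omega_k(a)-1)
\;=\; \sum_{a\in S}\sum_{x<i\le k}[\,p_i\mid a\,]
\;=\; \sum_{x<i\le k}\sum_{a\in S}[\,p_i\mid a\,]
\;=\; \sum_{x<i\le k}\#\{\,a\in S : p_i\mid a\,\}.
\end{equation*}
It then remains to recognize the inner count as $F_S(p_i p_x)$. Since $p_i\ne p_x$, the integers $p_i$ and $p_x$ are coprime, and since every $a\in S$ is divisible by $p_x$, for $a\in S$ the condition $p_i\mid a$ is equivalent to $p_ip_x\mid a$. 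Therefore $\#\{a\in S:p_i\mid a\}=F_S(p_ip_x)$, and substituting gives the claimed equality.

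This is essentially a double-counting argument, so I do not expect a genuine obstacle; the one place that warrants care is the last reduction, where replacing ``$p_i\mid a$'' by ``$p_ip_x\mid a$'' relies entirely on the fact that $p_x\mid a$ automatically for $a\in S$. I would therefore state explicitly, when setting up the proof, both consequences of $l_k(a)=p_x$ that are used: that $p_x\mid a$ (so the factor $p_x$ in $F_S(p_ip_x)$ costs nothing) and that no smaller prime from $P_k$ divides $a$ (so that $\omega_k(a)-1$ counts exactly the divisors $p_i$ with $x<i\le k$).
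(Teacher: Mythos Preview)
Your proof is correct and follows essentially the same double-counting argument as the paper: rewrite $\omega_k(a)-1$ as the number of primes $p_i\ne p_x$ from $P_k$ dividing $a$, then interchange the order of summation and use $p_x\mid a$ to identify the inner count with $F_S(p_ip_x)$. If anything, you are more explicit than the paper about why the range of $i$ can be taken as $x<i\le k$ (namely because $l_k(a)=p_x$ forces $p_j\nmid a$ for $j<x$), a point the paper uses but does not state.
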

\begin{proof}
Assume some $p_x \mid P_k$.  For any $a \in S$ the total number of composites $p_x p_i$ dividing $a$ where $p_i \mid P_k$ is therefore equal to the number of prime factors $p_i \neq p_x$ of $P_k$ which divide $a$.  Since $p_x \mid a$ this total is equal to $\omega_{k}(a)-1$. Each composite  $p_i p_x$ thus contributes $1$ to the sum
$$ \sum_{\substack{a \in S}}(\omega_{k}(a)-1)$$
for each $a \in S$ which has $p_i p_x$ as a divisor and so the total contribution that each such composite makes to that sum is $F_{S}(p_i p_x)$, and the result follows.
\end{proof}

\begin{theorem}
\label{recurrenceTheorem}
\begin{equation*}
 \sum_{\substack{b<a\leq b+m \\ \omega_{k}(a)>0}}(\omega_{k}(a)-1) = \sum\limits_{j=2}^{k}F_{b,m}(2p_j) +
 \sum\limits_{i=2}^{k-1}   \sum\limits_{j=i+1}^{k} \varphi(c_{b}(p_i p_j),F_{b,m}(p_{i} p_{j}),i-1)
\end{equation*}
\end{theorem}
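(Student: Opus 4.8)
The plan is to group the integers $a$ with $b<a\le b+m$ and $\omega_k(a)>0$ by their least prime factor dividing $P_k$, apply Theorem~\ref{minFactorSum} to each group, and then translate the resulting counts of multiples of a composite $p_ip_j$ into values of $\varphi$ by means of the defining property of $c_b(p_ip_j)$. For $1\le x\le k$ write $S_x$ for the set of $a\in\{b+1,\dots,b+m\}$ with $l_k(a)=p_x$. Every integer counted on the left-hand side satisfies $\omega_k(a)\ge 1$ and hence has a well-defined least prime factor among $p_1,\dots,p_k$, so the sets $S_1,\dots,S_k$ partition the range of summation and
\[
\sum_{\substack{b<a\le b+m\\ \omega_k(a)>0}}(\omega_k(a)-1)=\sum_{x=1}^{k}\sum_{a\in S_x}(\omega_k(a)-1)=\sum_{x=1}^{k}\ \sum_{x<j\le k}F_{S_x}(p_xp_j),
\]
the second equality being Theorem~\ref{minFactorSum} applied to each $S_x$ (the $x=k$ term being an empty sum). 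It then remains to identify the $x=1$ summand with $\sum_{j=2}^{k}F_{b,m}(2p_j)$ and the block $2\le x\le k-1$ with the double sum in the statement.

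For $x=1$ we have $p_1=2$, and for every $j\ge 2$ any multiple of $2p_j$ is even and therefore has $2$ as its least prime factor dividing $P_k$; hence $F_{S_1}(2p_j)=F_{b,m}(2p_j)$, which produces the first summand.

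For $2\le x\le k-1$ and $x<j\le k$, the composite $d=p_xp_j$ is squarefree with least prime factor $p_x$. By the definition of $c_b(d)$, the $F_{b,m}(d)$ consecutive integers starting at $c_b(d)+1$ have, term by term, the same GCD with $P_{x-1}$ as do the $F_{b,m}(d)$ multiples of $d$ lying in $\{b+1,\dots,b+m\}$; hence
\[
\varphi\bigl(c_b(p_xp_j),\,F_{b,m}(p_xp_j),\,x-1\bigr)
\]
equals the number of multiples of $p_xp_j$ in $\{b+1,\dots,b+m\}$ that are coprime to $P_{x-1}$. But a multiple $a$ of $p_xp_j$ is coprime to $P_{x-1}$ exactly when none of $p_1,\dots,p_{x-1}$ divides it, i.e.\ exactly when $l_k(a)=p_x$; so this count is precisely $F_{S_x}(p_xp_j)$. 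Summing over $x<j\le k$ and then over $2\le x\le k-1$ turns $\sum_{x=2}^{k-1}\sum_{x<j\le k}F_{S_x}(p_xp_j)$ into $\sum_{i=2}^{k-1}\sum_{j=i+1}^{k}\varphi(c_b(p_ip_j),F_{b,m}(p_ip_j),i-1)$, and adding back the $x=1$ term completes the proof.

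The only step with any real content is this last identification: recognizing that $\varphi(c_b(p_ip_j),F_{b,m}(p_ip_j),i-1)$ counts the multiples of $p_ip_j$ in the window that are coprime to $P_{i-1}$, and that imposing coprimality to $P_{i-1}$ on a multiple of $p_ip_j$ is exactly what forces $l_k(a)=p_i$, i.e.\ membership in $S_i$. Everything else is bookkeeping; indeed one could fold the $x=1$ term into the double sum by noting that $P_0=1$ makes $\varphi(c_b(2p_j),F_{b,m}(2p_j),0)=F_{b,m}(2p_j)$, but keeping it separate leaves the recursion in the statement nontrivial.
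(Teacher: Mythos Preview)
Your proof is correct and rests on the same three ingredients as the paper's: partitioning the integers with $\omega_k(a)>0$ according to their least prime factor $l_k(a)$, applying Theorem~\ref{minFactorSum} to each class, and using the defining property of $c_b(p_ip_j)$ to rewrite $F_{S_i}(p_ip_j)$ as $\varphi(c_b(p_ip_j),F_{b,m}(p_ip_j),i-1)$.

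The only difference is organizational. The paper proceeds by induction on an auxiliary parameter $x$, proving
\[
\sum_{\substack{b<a\le b+m\\ \omega_x(a)>0}}(\omega_k(a)-1)=\sum_{j=2}^{k}F_{b,m}(2p_j)+\sum_{i=2}^{x}\sum_{j=i+1}^{k}\varphi(c_b(p_ip_j),F_{b,m}(p_ip_j),i-1)
\]
and passing from $x$ to $x+1$ by adding in the contribution of the set of $a$ coprime to $P_x$ but divisible by $p_{x+1}$, which is exactly your $S_{x+1}$. Since $\{a:\omega_x(a)>0\}=S_1\cup\cdots\cup S_x$, the induction is just your partition unrolled one block at a time. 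Your direct decomposition is shorter and makes the structure more transparent; the paper's induction gains nothing mathematically but does make the intermediate identities explicit.
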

\begin{proof}
The proof is inductive.  For the base of the induction, we note  from Theorem \ref{minFactorSum}  that
\begin{equation*}
\sum_{\substack{b<a\leq b+m \\ \omega_{1}(a)>0}}(\omega_{k}(a)-1) = \sum\limits_{j=2}^{k}F_{b,m}(2p_j) 
\end{equation*}

To prove induction we begin by assuming that
\begin{equation*}
\sum_{\substack{b<a\leq b+m \\ \omega_{x}(a)>0}}(\omega_{k}(a)-1) = \sum\limits_{j=2}^{k}F_{b,m}(2p_j) +
 \sum\limits_{i=2}^{x}  \sum\limits_{j=i+1}^{k} \varphi(c_{b}(p_i p_j),F_{b,m}(p_{i} p_{j}),i-1)
\end{equation*}
holds for some $x < k-1$. 

 Let $S$ be the set of integers in our sequence $b+1,\ldots, b+m$ which are coprime to $P_{x}$ and which have $p_{x+1}$ as a factor.  Since all integers not in $S$ have either already been counted or have $\omega_{x+1}(a)=0$, we have
 
$$\sum_{\substack{b<a\leq b+m \\ \omega_{x+1}(a)>0}}(\omega_{k}(a)-1) = \sum_{\substack{b<a\leq b+m \\ \omega_{x}(a)>0}}(\omega_{k}(a)-1) +\sum_{\substack{a \in S }}(\omega_{k}(a)-1)$$
 
By definition $l_{k}(a)=p_{x+1}$ for all $a \in S$, and so from Theorem \ref{minFactorSum}  we have
$$\sum_{\substack{a \in S }}(\omega_{k}(a)-1) = \sum\limits_{\substack{x+1 < j \leq k \\ }} F_{S}(p_j p_{x+1})$$

We can rewrite the right hand side here as
$$\sum\limits_{\substack{x+1 < i \leq k \\ }} F_{S}(p_i p_{x+1})=
\sum_{\substack{j=x+2}}^{k}  \sum_{\substack{b<a\leq b+m\\ p_{x+1} p_{j} \mid a \\ a \textit{ coprime to }P_{x}}} 1 $$

For each pair $p_{x+1} p_{j}$ we have an arithmetic subsequence consisting of those integers between $b+1$ and $b+m$ that have $p_{x+1} p_{j}$ as a factor.   This subsequence contains $F_{b,m}(p_j p_{x+1})$ integers. The right hand side in the above expression contributes $1$ for each member of this arithmetic subsequence which is coprime to $P_{x}$.  From Theorem \ref{sequenceTheorem} this arithmetic subsequence is equivalent to a sequence of $F_{b,m}(p_j p_{x+1})$ consecutive integers starting at $c_{b}(p_{x+1} p_j)$, and so the right hand side above contributes  $1$ for each member of this sequence which is coprime to $P_{x}$, giving a total contribution of
$$\varphi(c_{x+1,j},F_{b,m}(p_j p_{x+1}),x) $$
for each such pair.  We thus have
$$\sum_{\substack{a \in S }}(\omega_{k}(a)-1) = \sum\limits_{j=x+2}^{k} \varphi(c_{x+1,j},F_{b,m}(p_j p_{x+1}),x)$$
and so
\begin{equation*}
\sum_{\substack{b<a\leq b+m \\ \omega_{x+1}(a)>0}}(\omega_{k}(a)-1) = \sum\limits_{j=2}^{k}F_{b,m}(2 p_j) +
 \sum\limits_{i=2}^{x+1}  \sum\limits_{j=x+2}^{k} \varphi(c_{b}(p_i p_j),F_{b,m}(p_{i} p_{j}),i-1)
\end{equation*}
also holds for $x+1$.  This completes the induction  and gives the required result.
\end{proof}
Finally, combining  Theorems \ref{omegaTheorem} and \ref{recurrenceTheorem} we have
 \begin{theorem}
 \label{bound1}
 \begin{equation*}
\varphi(b,m,k) = m-\sum_{\substack{i=1}}^{k} F_{b,m}(p_{i})    + \sum\limits_{j=2}^{k}F_{b,m}(2 p_j) +  \sum\limits_{i=2}^{k-1}   \sum\limits_{j=i+1}^{k} \varphi(c_{b}(p_i p_j),F_{b,m}(p_{i} p_{j}),i-1)
\end{equation*}
for all $b,m$ and $k$.
 \end{theorem}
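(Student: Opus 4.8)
The statement is a direct corollary of the two preceding theorems, so the plan is simply to substitute one into the other. First I would recall the identity of Theorem~\ref{omegaTheorem}, which writes $\varphi(b,m,k)$ as the ``inclusion after one step'' count $m - \sum_{i=1}^{k} F_{b,m}(p_i)$ plus the overcount correction $\sum_{b<a\le b+m,\ \omega_k(a)>0}(\omega_k(a)-1)$. This correction term is exactly the quantity for which Theorem~\ref{recurrenceTheorem} supplies a closed recurrent form, namely $\sum_{j=2}^{k}F_{b,m}(2p_j) + \sum_{i=2}^{k-1}\sum_{j=i+1}^{k}\varphi(c_{b}(p_ip_j),F_{b,m}(p_ip_j),i-1)$.

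The single step of the argument is then to replace the correction sum in the Theorem~\ref{omegaTheorem} expression by the right-hand side of Theorem~\ref{recurrenceTheorem}, which yields precisely
\begin{equation*}
\varphi(b,m,k) = m-\sum_{i=1}^{k} F_{b,m}(p_{i}) + \sum_{j=2}^{k}F_{b,m}(2 p_j) + \sum_{i=2}^{k-1}\sum_{j=i+1}^{k} \varphi(c_{b}(p_i p_j),F_{b,m}(p_{i} p_{j}),i-1),
\end{equation*}
as claimed. Since both cited theorems hold for all integers $b$, $m$ and $k$, so does the combined identity; there are no side conditions to track.

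There is essentially no obstacle here: the only thing to verify is that the summation ranges and the arguments of $\varphi$ and $F_{b,m}$ in the two statements are written identically, so that the substitution is literal rather than merely morally correct. In particular one should check that the ``$\omega_k(a)>0$'' index set in Theorem~\ref{omegaTheorem} matches the set over which Theorem~\ref{recurrenceTheorem} was proved (it does, since that theorem's left-hand side is exactly that sum), and that the double sum indices $2\le i\le k-1$, $i+1\le j\le k$ agree in both places. Once that bookkeeping is confirmed, the proof is complete in one line.
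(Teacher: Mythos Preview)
Your proposal is correct and matches the paper's own proof exactly: the paper simply states that the result follows by combining Theorems~\ref{omegaTheorem} and~\ref{recurrenceTheorem}, which is precisely the substitution you describe. There is nothing more to add.
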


\section{A lower bound on $ \varphi_{min}(m,k) $  }
We now give a lower bound on $ \varphi_{min}(m,k)$, the minimum value of $ \varphi(b,m,k) $ across all $b$ .  This bound makes use of constraints on the co-occurence of residues of primes to $P_k$.
We begin with a very obvious result, which we give without proof.
\begin{theorem}
\label{ceiling}
If $d \nmid m$ then
$$F_{b,m}(d) = \left\lceil\frac{m}{d}\right\rceil \Leftrightarrow (b+m ) \ \textit{mod $d$}  < m  \ \textit{mod $d$}  
$$
\end{theorem}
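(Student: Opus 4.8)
The plan is to reduce the statement to a single question: whether adding the residues of $b$ and $m$ modulo $d$ produces a carry. First I would record the elementary count $F_{b,m}(d) = \lfloor (b+m)/d\rfloor - \lfloor b/d\rfloor$, since an integer in $b+1,\dots,b+m$ is divisible by $d$ exactly when it is one of the multiples of $d$ lying in the interval $(b,b+m]$. The hypothesis $d \nmid m$ gives $\lceil m/d\rceil = \lfloor m/d\rfloor + 1$, so $F_{b,m}(d)$ can only equal $\lfloor m/d\rfloor$ or $\lfloor m/d\rfloor + 1 = \lceil m/d\rceil$; the theorem is therefore an assertion about which of these two values actually occurs.

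Next I would set $s = b \bmod d$ and $u = m \bmod d$, so that $0 \le s < d$ while $0 < u < d$ (the strict positivity of $u$ is exactly where $d \nmid m$ enters). Writing $b = \lfloor b/d\rfloor\, d + s$ and $m = \lfloor m/d\rfloor\, d + u$ gives $b+m = (\lfloor b/d\rfloor + \lfloor m/d\rfloor)\, d + (s+u)$ with $0 \le s+u < 2d$, hence $\lfloor (b+m)/d\rfloor$ equals $\lfloor b/d\rfloor + \lfloor m/d\rfloor$ when $s + u < d$ and equals one more than that when $s+u \ge d$. Consequently $F_{b,m}(d) = \lceil m/d\rceil$ if and only if $s+u \ge d$.

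It then remains to identify the condition $s+u\ge d$ with the condition $(b+m)\bmod d < m \bmod d$. If $s+u\ge d$ then $(b+m)\bmod d = s+u-d$, and this is $< u = m\bmod d$ because $s<d$; conversely, if $s+u<d$ then $(b+m)\bmod d = s+u \ge u = m\bmod d$ because $s\ge 0$, so the strict inequality fails. Chaining this equivalence with the one from the previous paragraph yields the theorem. I do not anticipate a genuine obstacle — the argument is a short case split — the only point demanding care is the role of the hypothesis $d\nmid m$, which simultaneously forces $u\neq 0$ (making ``$<$'' rather than ``$\le$'' the correct comparison) and makes $\lceil m/d\rceil$ strictly larger than $\lfloor m/d\rfloor$, so that ``$F_{b,m}(d) = \lceil m/d\rceil$'' is a nonvacuous dichotomy.
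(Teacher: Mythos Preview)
Your argument is correct: the reduction to the carry condition $s+u\ge d$ via $F_{b,m}(d)=\lfloor(b+m)/d\rfloor-\lfloor b/d\rfloor$ and the two-case identification of $(b+m)\bmod d$ are both sound, and you have correctly isolated where the hypothesis $d\nmid m$ is used. The paper itself offers no proof of this statement, declaring it ``a very obvious result, which we give without proof,'' so there is nothing to compare your approach against.
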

Using this we get

\begin{theorem}
 \label{divisors1}
  For primes $p$ and $q$ let $ x = m \textit{ mod $ p$}$. If $x > 0$ and $q \mid m - x + p $ then 
  $$ F_{b,m}(p)= \left\lceil\frac{m}{p}\right\rceil \Rightarrow F_{b,m}(p q)= \left\lceil\frac{m}{p q}\right\rceil$$
\end{theorem}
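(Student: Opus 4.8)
The plan is to translate both the hypothesis and the conclusion into statements about residues using Theorem \ref{ceiling}, and then to lift a congruence modulo $p$ to one modulo $pq$. First I would note that $x = m \bmod p > 0$ forces $p \nmid m$, hence $pq \nmid m$, so Theorem \ref{ceiling} is available both for $d = p$ and for $d = pq$. It tells us that the hypothesis $F_{b,m}(p) = \lceil m/p \rceil$ is equivalent to $(b+m) \bmod p < x$, and that the conclusion $F_{b,m}(pq) = \lceil m/(pq)\rceil$ is equivalent to $(b+m) \bmod (pq) < m \bmod (pq)$. So it is enough to deduce the second residue inequality from the first.

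Next I would pin down $m \bmod (pq)$ exactly. Since $x = m \bmod p$, the integer $m - x$ is divisible by $p$, so $m - x + p$ is divisible by $p$; by hypothesis it is also divisible by $q$, so (as $p$ and $q$ are distinct primes) it is divisible by $pq$, giving $m \equiv x - p \pmod{pq}$. Because $0 < x < p$ this yields $m \bmod (pq) = x - p + pq = x + (q-1)p$. On the other side, writing $r = (b+m)\bmod p$, every integer congruent to $b+m$ modulo $pq$ is congruent to $r$ modulo $p$, so $(b+m)\bmod (pq)$ is one of $r, r+p, \ldots, r+(q-1)p$; in particular $(b+m)\bmod(pq) \le r + (q-1)p$. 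The hypothesis gives $r < x$, hence
$$(b+m)\bmod(pq) \;\le\; r + (q-1)p \;<\; x + (q-1)p \;=\; m\bmod(pq),$$
and a final appeal to Theorem \ref{ceiling} gives $F_{b,m}(pq) = \lceil m/(pq)\rceil$, as required.

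The only step needing care is the passage from ``$p \mid m-x+p$ and $q \mid m-x+p$'' to ``$pq \mid m-x+p$'', which requires $\gcd(p,q)=1$; this is the sole place the primality of $p$ and $q$ (rather than some weaker divisibility property) is invoked, and it holds automatically in the intended application, where $p$ and $q$ are two distinct prime factors of $P_k$. Everything else in the argument is routine manipulation of residues and ceilings, so I expect no serious obstacle beyond organising these observations in the right order.
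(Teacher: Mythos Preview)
Your argument is correct and follows essentially the same route as the paper: both translate the hypothesis and conclusion via Theorem~\ref{ceiling} into residue inequalities, identify $m \bmod pq = x + (q-1)p$, and bound $(b+m)\bmod pq$ by $((b+m)\bmod p) + (q-1)p$. Your phrasing of the last bound (listing the $q$ candidates $r, r+p,\ldots,r+(q-1)p$ for $(b+m)\bmod pq$) is a slightly cleaner packaging of the paper's step of locating a multiple of $pq$ among $y, y-p,\ldots,y-(q-1)p$, and your explicit remark that $p\ne q$ is needed to pass from $p\mid m-x+p$ and $q\mid m-x+p$ to $pq\mid m-x+p$ fills in a point the paper leaves implicit.
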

\begin{proof}
Let $ x = m \textit{ mod $ p$}$.  Assume  $x > 0$, $q \mid m - x + p  $ and
 $$F_{b,m}(p)= \left\lceil\frac{m}{p}\right\rceil$$  Then since $x>0$ means $p \nmid m$ from Theorem \ref{ceiling}
we have $$(b+m) \textit{ mod $ p$} < x$$
Let $$y = b+m - ( (b+m) \textit{ mod $ p$})$$
and $p$ divides every integer in the arithmetic sequence $y, y-p, \ldots ,y-(q - 1)p$.  Since there are $q$ terms in this sequence, one of these terms is also divided by $q$, and so  
$$(b+m) \textit{ mod $ p q$} \leq b+m - y-(q - 1)p$$
However $$b+m -y =  (b+m) \textit{ mod $ p$} < x $$
and so
$$(b+m) \textit{ mod $ p q$} < x+(q - 1)p$$
By assumption we have $q \mid m - x + p  $;  by definition $p \mid m-x$ and so we have $p q \mid m - x + p  $ and $x <p <p q$. We thus get  $m \textit{ mod $p q$} =  x - p + p q=x + (q-1)p$ and so
$$(b+m) \textit{ mod $ p q$} < m \textit{ mod $p q$}$$
and  from Theorem \ref{ceiling} we get the required result.
 \end{proof}

  \begin{theorem}
 \label{divisors2}
  For any integer $m$ there exists an integer $b$ such that $\varphi(b,m,k)=\varphi_{min}(m,k)$ and such that for all odd primes $p \mid m-1$ we have $$ F_{b,m}(p)= \left\lceil\frac{m}{p}\right\rceil \Rightarrow F_{b,m}(2 p)= \left\lceil\frac{m}{2 p}\right\rceil  $$
\end{theorem}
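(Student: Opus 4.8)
The plan is to reduce everything to choosing a minimizing $b$ for which $b+m$ is even, since the implication then becomes almost automatic. So I would prove two things: (i) the stated implication holds for \emph{every} $b$ with $b+m$ even, and (ii) $\varphi_{min}(m,k)$ is attained at some such $b$.

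For (i) I would fix an odd prime $p\mid m-1$ and assume $b+m$ is even and $F_{b,m}(p)=\left\lceil\frac{m}{p}\right\rceil$. Since $\gcd(m,m-1)=1$ and $p$ is prime we have $p\nmid m$, so Theorem~\ref{ceiling} applies with $d=p$; because $m\bmod p=1$ it forces $(b+m)\bmod p<1$, hence $p\mid b+m$. Combined with $2\mid b+m$ and $\gcd(2,p)=1$ this gives $2p\mid b+m$, so $(b+m)\bmod 2p=0$; and since $2p\nmid m$ (otherwise $p\mid m$) we get $m\bmod 2p\ge 1>0=(b+m)\bmod 2p$, so Theorem~\ref{ceiling} with $d=2p$ yields $F_{b,m}(2p)=\left\lceil\frac{m}{2p}\right\rceil$. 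I note that when $m$ is even this part is already Theorem~\ref{divisors1} applied with $q=2$, which in fact gives the implication for all $b$; the argument just given handles all $m$ at once, the only delicate point being that $m\bmod 2p$ equals $1$ for odd $m$ but $p+1$ for even $m$, whereas all we actually use is $2p\nmid m$.

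For (ii) I would start with any $b'$ achieving $\varphi(b',m,k)=\varphi_{min}(m,k)$ (one exists because $\varphi(\cdot,m,k)$ is $P_k$-periodic). If $b'+m$ is even we are done. Otherwise $b'+m+1$ is even, hence divisible by $2\mid P_k$ and so not coprime to $P_k$; moving from the window $b'+1,\dots,b'+m$ to $b'+2,\dots,b'+m+1$ deletes $b'+1$ and inserts the non-coprime integer $b'+m+1$, so $\varphi(b'+1,m,k)\le\varphi(b',m,k)=\varphi_{min}(m,k)$ and therefore $\varphi(b'+1,m,k)=\varphi_{min}(m,k)$ with $(b'+1)+m$ even. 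I expect this shift step to be the crux: the key is the elementary observation that once the right endpoint of the window lands on an even (hence non-coprime) integer, advancing the window by one integer cannot increase the count of integers coprime to $P_k$, so after at most one shift the minimizing window ends at an even integer and part (i) finishes the proof. Everything else is a direct application of Theorem~\ref{ceiling}.
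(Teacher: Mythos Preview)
Your proposal is correct and follows essentially the same route as the paper: choose a minimizing $b$ with $b+m$ even (using the one-step shift argument, which you justify more carefully via $\varphi(b'+1,m,k)\le\varphi(b',m,k)$ and minimality, whereas the paper asserts equality directly), then apply Theorem~\ref{ceiling} twice to pass from $p\mid b+m$ to $2p\mid b+m$ to the desired conclusion. Your added verifications that $p\nmid m$ and $2p\nmid m$ make explicit steps the paper leaves implicit, but the structure is identical.
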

\begin{proof}
Let $b$ be some integer such that $\varphi(b,m,k)=\varphi_{min}(m,k)$ and $ 2 \mid b+m$.  (To see that we will always be able to select  such a $b$, note that for any $a$ such that $\varphi(a,m,k)=\varphi_{min}(m,k)$ we either have $ 2 \mid a+m$  or $ 2 \nmid a+m$.  If  $ 2 \mid a+m$ then $b=a$ satisfies our requirements, whereas if $ 2 \nmid a+m$ then $ 2 \mid a+m+1$ and so $\varphi(a+1,m,k)=\varphi(a,m,k)=\varphi_{min}(m,k)$ and $b=a+1$ satisfies our requirements.) For any $p \mid m-1$ we have
$$ F_{b,m}(p)= \left\lceil\frac{m}{p}\right\rceil \Rightarrow(b+m ) \ \textit{mod $p$}  < m  \ \textit{mod $p$} = 1$$
(from Theorem \ref{ceiling}) and so for any $p \mid m-1$ we have
$$ F_{b,m}(p)= \left\lceil\frac{m}{p}\right\rceil \Rightarrow p \mid (b+m ) \Rightarrow 2p \mid (b+m ) \Rightarrow  F_{b,m}(2p)= \left\lceil\frac{m}{2p}\right\rceil $$
as required.   
\end{proof}

  Combining these results we get the following
\begin{theorem}
\label{headerBound}
For integer $m$ let $r =  m \textit{ mod } P_k$  
and let
 \begin{equation*}
E= |\left\{  i:   1<i\leq k, p_i \nmid r, 2 \mid (r - (r \textit{ mod $ p_i$})+p_i) \textit{ or } p_i \mid r-1 \right\} |  
\end{equation*}
Then
\begin{equation*}
\begin{split}
\varphi(P_k) \left\lfloor \frac{m}{P_k}\right\rfloor + r-   \sum\limits_{i=1}^{k}\left\lceil\frac{r}{p_i}\right\rceil&  + \sum\limits_{i=2}^{k}\left\lfloor\frac{r}{2 p_i}\right\rfloor +E   \\ &+  \sum\limits_{i=2}^{k-1}   \sum\limits_{j=i+1}^{k} 
\varphi_{min}\left(\left\lfloor \frac{r}{p_i p_j}\right\rfloor,i-1\right)  \leq \varphi_{min}(m,k)
\end{split}
\end{equation*}
for all $m$ and $k$.
\end{theorem}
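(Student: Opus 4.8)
The plan is to combine the exact recurrence of Theorem~\ref{bound1} with three term-by-term estimates, after first reducing from $m$ to its residue $r = m \bmod P_k$. The reduction is the easy step: any run of $P_k$ consecutive integers meets a complete residue system modulo $P_k$ and hence contains exactly $\varphi(P_k)$ integers coprime to $P_k$, so splitting a run of $m$ consecutive integers into $\lfloor m/P_k\rfloor$ full blocks plus one run of $r$ consecutive integers gives $\varphi_{min}(m,k) \geq \varphi(P_k)\lfloor m/P_k\rfloor + \varphi_{min}(r,k)$. It therefore suffices to show that the remainder of the claimed expression (with $r$ in place of $m$, dropping the leading term) is a lower bound for $\varphi_{min}(r,k)$.

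To obtain that, I would fix a particular $b$ attaining $\varphi(b,r,k) = \varphi_{min}(r,k)$, namely the one supplied by Theorem~\ref{divisors2} applied with $r$ playing the role of $m$ (so that $F_{b,r}(p)=\lceil r/p\rceil \Rightarrow F_{b,r}(2p)=\lceil r/(2p)\rceil$ for every odd prime $p \mid r-1$), and write out Theorem~\ref{bound1} for $\varphi(b,r,k)$. Then bound the three floating pieces: (i) $F_{b,r}(p_i) \leq \lceil r/p_i\rceil$ for each $i$; (ii) $F_{b,r}(2p_j) \geq \lfloor r/(2p_j)\rfloor$ for each $j$; and (iii) in the double sum, $\varphi(c_b(p_ip_j),F_{b,r}(p_ip_j),i-1) \geq \varphi_{min}(F_{b,r}(p_ip_j),i-1) \geq \varphi_{min}(\lfloor r/(p_ip_j)\rfloor,i-1)$, the last inequality using $F_{b,r}(p_ip_j)\geq\lfloor r/(p_ip_j)\rfloor$ together with the (easily verified) fact that $\varphi_{min}(\cdot,i-1)$ is nondecreasing in its first argument, since a longer run of consecutive integers contains a shorter one and hence at least as many integers coprime to $P_{i-1}$. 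Used crudely, (i) and (ii) already reproduce the claimed bound except for the summand $E$, so the remaining work is to recover $E$.

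The $+E$ correction is where the argument must be done carefully, and it is the one genuine obstacle. For each index $i$ counted by $E$ we have $p_i \nmid r$, hence $\lceil r/p_i\rceil = \lfloor r/p_i\rfloor + 1$ and $\lceil r/(2p_i)\rceil = \lfloor r/(2p_i)\rfloor + 1$; moreover the implication $F_{b,r}(p_i) = \lceil r/p_i\rceil \Rightarrow F_{b,r}(2p_i) = \lceil r/(2p_i)\rceil$ holds for our chosen $b$, either by Theorem~\ref{divisors1} with $q=2$ (in the case $2 \mid r-(r\bmod p_i)+p_i$, which holds for every $b$) or by Theorem~\ref{divisors2} (in the case $p_i \mid r-1$, using that $p_i$ is odd for $i>1$). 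I would then split into two cases for each such $i$. If $F_{b,r}(p_i) = \lceil r/p_i\rceil$, the implication forces $F_{b,r}(2p_i) = \lceil r/(2p_i)\rceil = \lfloor r/(2p_i)\rfloor + 1$, so the combined contribution satisfies $-F_{b,r}(p_i) + F_{b,r}(2p_i) = -\lceil r/p_i\rceil + \lfloor r/(2p_i)\rfloor + 1$. If instead $F_{b,r}(p_i) \leq \lceil r/p_i\rceil - 1$, then $-F_{b,r}(p_i) + F_{b,r}(2p_i) \geq -\lceil r/p_i\rceil + 1 + \lfloor r/(2p_i)\rfloor$. Either way each $E$-index yields an extra $+1$ beyond the crude estimate, while for indices not counted by $E$ (and for $i=1$, where no $2p_i$-term occurs in the sum $\sum_{j=2}^{k}$) the crude estimates (i) and (ii) are used as they stand. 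Summing (i) and (ii) with these refinements, together with (iii) and the reduction step, yields exactly the stated inequality. The crux is the bookkeeping in this last step, in particular the observation that the gain of $1$ occurs in \emph{both} branches of the dichotomy, so that $E$ may be added unconditionally.
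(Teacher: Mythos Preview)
Your proposal is correct and follows essentially the same route as the paper's proof: pick a minimizing $b$ satisfying Theorem~\ref{divisors2}, feed it into the exact identity of Theorem~\ref{bound1}, and bound the three groups of terms, with Theorems~\ref{divisors1} and~\ref{divisors2} supplying the extra $+1$ for each index counted by $E$. Your write-up is in fact more explicit than the paper's in two respects --- you spell out the monotonicity of $\varphi_{min}(\cdot,i-1)$ needed for the double-sum estimate, and you make the two-case dichotomy for each $E$-index fully visible --- and you organize the reduction slightly differently (first passing from $m$ to $r$ at the level of $\varphi_{min}$, then choosing $b$ for $r$, whereas the paper chooses $b$ for $m$ and then reduces; the two are equivalent since $p_i \mid P_k$ forces $F_{b,m}(p_i)=\lceil m/p_i\rceil \Leftrightarrow F_{b,r}(p_i)=\lceil r/p_i\rceil$).
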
 
\begin{proof}
Assume  $r$ and $E$ as defined above.  Let $b$ be an integer for which the conditions in Theorem \ref{divisors2} hold.  Since Euler's totient $\varphi(n)$ gives the number of integers coprime to $n$ in any sequence of $n$ consecutive integers we have
$$ \varphi(b,m,k) = \varphi(P_k) \left\lfloor \frac{m}{P_k}\right\rfloor + \varphi(b,r,k)$$  and we need only consider  the value of $\varphi(b,r,k)$.  
From Theorems \ref{divisors1} and \ref{divisors2} we see that 
$$F_{b,r}(p)= \left\lceil\frac{r}{p}\right\rceil \textit{ and } F_{b,r}(2 p)= \left\lfloor\frac{r}{2p}\right\rfloor$$
cannot hold for any prime  counted in the definition of $E$, and so we have 
\begin{equation*}
r-   \sum\limits_{i=1}^{k}\left\lceil\frac{r}{p_i}\right\rceil + \sum\limits_{i=2}^{k}\left\lfloor\frac{r}{2 p_i}\right\rfloor +E    
\leq r-\sum\limits_{i=1}^{k} F_{b,r}(p_{i})   + \sum\limits_{j=2}^{k}F_{b,r}(2 p_i)  
\end{equation*}

Combining this with the fact that by definition
$$ \varphi_{min}\left(\left\lfloor\frac{m}{p_{i} p_{j}}\right\rfloor,i-1\right) \leq  \varphi(c_{b}(p_i p_j),F_{b,m}(p_{i} p_{j}),i-1)$$
we get 
\begin{equation*}
\begin{split}
\varphi(P_k) \left\lfloor \frac{m}{P_k}\right\rfloor & + r-   \sum\limits_{i=1}^{k}\left\lceil\frac{r}{p_i}\right\rceil   + \sum\limits_{i=2}^{k}\left\lfloor\frac{r}{2 p_i}\right\rfloor +E   +  \sum\limits_{i=2}^{k-1}   \sum\limits_{j=i+1}^{k} 
\varphi_{min}\left(\left\lfloor \frac{r}{p_i p_j}\right\rfloor,i-1\right) \\ \leq 
& m-\sum_{\substack{i=1}}^{k} F_{b,m}(p_{i})    + \sum\limits_{j=2}^{k}F_{b,m}(2 p_j)  +  \sum\limits_{i=2}^{k-1}   \sum\limits_{j=i+1}^{k} \varphi(c_{b}(p_i p_j),F_{b,m}(p_{i} p_{j}),i-1)
\end{split}
\end{equation*}
From Theorem \ref{bound1} the right hand side of this expression is equal to $\varphi(b,m,k)$, which from Theorem \ref{divisors2} is equal to $\varphi_{min}(m,k)$ and  we get the required result.
 \end{proof}

\section{Computations }

We first describe Algorithm $1$, which  computes a recursive function $\varphi_{low}(m,k)$.  Values of this function give a lower bound on $\varphi_{min}(m,k)$ as given in Theorem \ref{headerBound}.

\newcommand{\OneLineIf}[2]{ \State \algorithmicif\ {#1} \  \algorithmicthen \ {#2} \algorithmicend \ \algorithmicif }
\newcommand{\OneLineIfElse}[3]{ \State \algorithmicif\ {#1} \  \algorithmicthen \ {#2} \algorithmicelse \ {#3} \algorithmicend \ \algorithmicif }
\newcommand{\OneLineFor}[2]{ \State \algorithmicfor\ {#1} \  \algorithmicdo \ {#2} \algorithmicend \ \algorithmicfor }

\begin{algorithm}
\label{alg1}
\begin{algorithmic}[4] 
\Function{$\varphi_{low}$}{m,k}
\State $S \gets \left\lfloor\frac{m}{P_{k}}\right\rfloor$
\State $L \gets \varphi(P_{k}) S$
\State $r \gets m - S$
\OneLineIf{$k \leq 6$}{ \Return  $L   +  v\left( r,k \right)$}
\OneLineIf{  $r < 2 p_{k-1}$}{ \Return L }
\OneLineIf{$UseKnown = True$ and  $k \leq 49$  and  $r <h(k)$ }{ \Return L }
\State $L \gets L + r  - \left\lceil\frac{r}{2} \right\rceil $
\OneLineFor{$i = 2 $ to $k$}{$ L \gets  L - \left\lceil\frac{r}{p_i} \right\rceil + \left\lfloor\frac{r}{2p_i} \right\rfloor$}
\State $E \gets 0$
\For{$i = 2 $ to $k$}
	 \If{  $p_i \nmid r$ and ($ 2 \mid (r- (r\textit{ mod } p_i)+p_i)$ or $p_i \mid (r-1)$)} 
	\State $ E \gets E + 1$ 
	 \EndIf
\EndFor 
 \State $i \gets 2$ 
\Repeat
     \State  $j=i+1$ 
     \Repeat 
      \State $m_{new} \gets \left\lfloor \frac{r}{p_i p_j}\right\rfloor$
      \State $U \gets   \varphi_{low}\left( m_{new},i-1 \right)$
      \OneLineIf{$U > 0$}{$ L  \gets L +  U$}
  \State $ j \gets j+1$
  \Until{$j=k $ or $U \leq 0$}
  \State $ i \gets i+1$
\Until{$i =k$} 
\OneLineIfElse{$L>0$}{ \Return L }{\Return 0}
\EndFunction
\end{algorithmic}
\caption{The function $\varphi_{low}(m,k)$.  This function requires that explicit values
of $\varphi_{min}(m, k)$ are known for all $k$ up to $6$ and all $m \leq P_{k}$. Known values of $h(k)$
for $k$ less than $50$ are used only if the variable $UseKnown$ is set.}
\end{algorithm}

In this algorithm the variables $L$ is used to hold the incrementally computed lower bound on $\varphi_{min}(m,k)$.
Lines $2 $ to $4$ obtain a value for $\left\lfloor \frac{m}{P_k} \right\rfloor$, assign an initial value for $L$ as in Theorem \ref{headerBound}, and obtain a value for $r$ (again as in Theorem \ref{headerBound}).  
  
  We explicitly computed the value of $\varphi_{min}(m,i)$ for all $i$ less than or equal to $6$ and for all $m$ less than $P_i$; these provide stopping conditions for the recursion in $\varphi_{low}(m,k)$  (line $5$).  
  
From Hagedorn \cite{Hagedorn} we  have explicit values of $h(k)$ for $k \leq 49$, and we also have the general result that $ 2 p_{k-1} \leq h(k)$ for all $k$.  Since $\varphi_{min}( m ,k)=0$ if $m < h(k)$ we use both the explicit values and the general result as further stopping conditions for the recursion (lines $6$ and $7$).  Hagedorn's explicit values for $h(k)$ for $k \leq 49$ are used only if the variable $UseKnown$ is set to true.

Lines $8$ and $9$  calculate the first three terms in the left hand side of Theorem \ref{headerBound}, with lines $9$ to $14$ calculating the $E$ term in that sum. 

Lines $16$ to $26$ give two nested loops containing recursive calls to $\varphi_{low}(\cdot)$ and so calculating  a lower bound on value of the double sum in Theorem \ref{headerBound}.  For each pair of values for indices $i$ and $j$ in these loops, the algorithm recursively gets a lower bound on  $\varphi_{min}(m_{new},i-1)$ as in Theorem \ref{headerBound}, placing this bound in a variable $U$.

Finally, line $27$ returns a lower bound on $\varphi_{min}(m, k)$. Since $\varphi_{min}(m, k)$ cannot
be less than $0$ (no sequence contains a negative number of primes) the algorithm
returns the computed bound $L$ only if $L > 0$; otherwise $0$ is returned.

At lines $24$ and $26$ the inner and outer loops terminate at $j=k$ and $i=k-1$ respectively, as in the double sum in Theorem \ref{headerBound}.  For efficiency the inner loop also terminates if the last obtained recursive lower bound value $U$ was equal to $0$: in this case all further recursive calls within this loop will also return a value of $0$, because $m_{new}$ is decreasing on each cycle of the inner loop.  Similarly, the outer loop  terminates if the first recursive bound value $U$ obtained on the previous cycle of the inner loop had a value of $0$: in this case recursive calls in  all further cycles of the inner loop will also return a value of $0$, because all values of $m_{new}$ in those further cycles will be less than the current value of $m_{new}$.

We implemented the function $\varphi_{low}$ in the computer algebra system PARI \cite{PARI}.  We take $b(k)$ to represent the lowest integer $m$ such that $\varphi_{low}(m,k)>0$ and so $b(k)$ is an upper bound on $h(k)$.  We find $b(k)$ using linear search across increasing values of $m$  (Algorithm $2$).

\begin{algorithm}
\label{alg2}
\begin{algorithmic}  
\State $m \gets initialM$ 
\For{$k = start$ to $end$}
	 \While{  $\varphi_{low}(m,k) < 1$}
 \State $m \gets m+1$ 
 \EndWhile 
 \State print(k,m)
\EndFor
\end{algorithmic}
\caption{Calculating $b(k)$ for  $k$ from $start$ to $end$.}
\end{algorithm}

To compare values of $b(k)$ with the values of $h(k)$ calculated by Hagedorn for $k$ from $1$ to $49$  \cite{Hagedorn}  we ran Algorithm $2$ with $start=1$, $end=49$, $initialM=1$ and variable $UseKnown$  set to $False$.   In this range the bound $b(k)$ was less than $3$ times the true value of $h(k)$. Figure $1$ graphs $h(k)$, $b(k)$, and  Steven's and Kanold's bounds on $h(k)$ in this range.

To calculate values of $b(k)$ up to $k=1000$ we ran Algorithm $2$ with $start=50$, $end=1000$,  variable $UseKnown$  set to $True$, and $initialM$ set to $h(49)=742$.

   For a given $k$ Algorithm $2$ calculates values of $\varphi_{low}(m,k)$ for increasing values of $m$ starting at $m=b(k-1)+1$ and continuing until $b(k)$ is reached (that is, until $\varphi_{low}(m,k) > 0$).  Clearly the number of integers $m$ for which Algorithm $2$ must calculate $\varphi_{low}(m,k)$ grows as $k$ grows.  Algorithm $2$ will thus be relatively slow for large $k$ (in our runs Algorithm $2$ took around 15 minutes to calculate $b(1000)$).  For this reason  when calculating bounds $b(k)$ for $k$ greater than $1000$ we took an alternate approach,  using the function  $\varphi_{low}$ to find the lowest integer $C_k$ such that $$\varphi_{low}\left(\left\lfloor \frac{C_k \ k^2}{10000}\right\rfloor,k\right)>0$$ (see Algorithm $3$).  For each $C_k$ we then have $h(k) \leq (C_k \ k^{2})/10000$.  To calculate values of $b(k)$ up to $k=10000$ we ran Algorithm $3$ with $start=1001$, $end=10000$,  variable $UseKnown$  set to $True$, and $initialC$ set to $10000$.  

 \begin{algorithm}
\label{alg3}
\begin{algorithmic}  
\State $C_k \gets initialC$ 
\For{$k = start$ to $end$}
\State $S \gets C_k$ 
	 \While{  $\varphi_{low}\left(\left\lfloor \frac{C_k \ k^{2}}{10000}\right\rfloor,k\right) >0$}
 \State $C_k \gets C_k-1$
 \EndWhile 
 \If{$C_k < S$}
     \State $C_k \gets C_k+1$
 \Else
 	 \While{  $\varphi_{low}\left(\left\lfloor \frac{C_k \ k^{2}}{10000}\right\rfloor,k\right) <1$}
 \State $C_k \gets C_k+1$
 \EndWhile  
 \EndIf
 \State $print\left(k,\left\lfloor \frac{C_k \ k^{2}}{10000}\right\rfloor\right)$
\EndFor
\end{algorithmic}
\caption{Calculating an upper bound on $b(k)$ for $k$ from $start$ to $end$.}
\end{algorithm}

\pagebreak

 Figure $1$  graphs $b(k)$ for $k$ from $50$ to $10000$ as obtained from these  algorithms.
We find
\begin{equation*}
  b(k) \leq 0.27749612254 \ k^2 \ log \ k 
\end{equation*} 
for all $k$ in this range, and so this  gives an upper bound on $h(k)$ for these $k$.

Figure $2$ compares $h(k)$, $b(k)$, Kanold's bound and Steven's bound for $k$ up to $49$.
Figure $3$ compares the logs of $b(k)$, Stevens' bound, and Kanold's bounds for $k$ up to $10000$. From this graph we see that $b(k)$ is hundreds of orders of magnitude stronger than Stevens' and Kanold's bounds in this range.


\bibliographystyle{amsplain}

\bibliography{references}     
 \begin{figure}[t]
 \label{BkTo10000}
\scalebox{0.6}[0.6]{
      \includegraphics[viewport= 0 500  640 720]{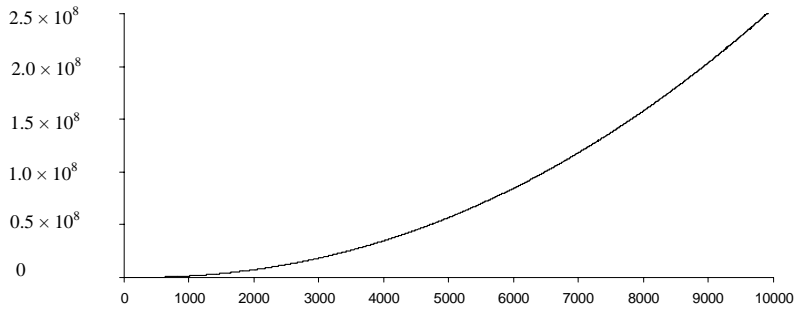}
     } 
\caption{Graph of $b(k)$ vs $k$ for $k$ from $50$ to $10000$}
\end{figure}

 \begin{figure}[t]
 \label{compareTo49}
\scalebox{0.65}[0.65]{
      \includegraphics[viewport= 50 520  680 740]{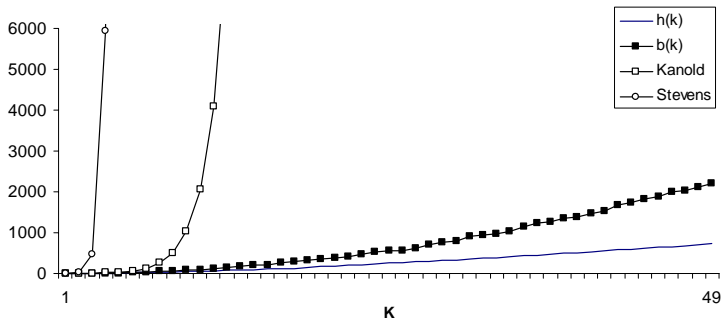}
     } 
\caption{Graph comparing $h(k)$, $b(k)$, Kanold's bound and Steven's bound for $k$ up to $49$.}
\end{figure}

  \begin{figure}[t]
 \label{compareLogs}
\scalebox{0.65}[0.65]{
      \includegraphics[viewport= 50 520  680 740]{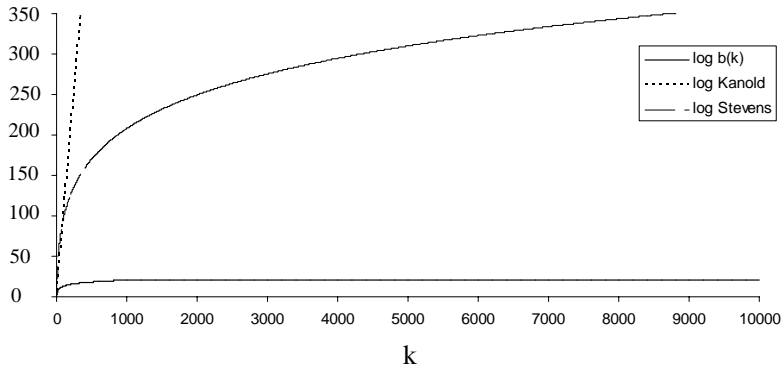}
     } 
\caption{Graph comparing  the log of $b(k)$ with the logs of Kanold's and Steven's bounds for $k$ to $10000$.}
\end{figure}

\end{document}